\setlist[itemize]{leftmargin=1cm}
\setlist[enumerate]{leftmargin=1cm}
\theoremstyle{plain}
\newtheorem{theo}{Theorem}[section]
\theoremstyle{definition}
\newtheorem{defi}[theo]{Definition}
\newtheorem{expl}[theo]{Example}
\newcommand\GF{{\mathcal F}}
\newcommand\GG{\mathcal G}
\newcommand\GH{{\mathcal H}}
\newcommand\Id{{\rm Id}}
\newcommand\LH{{\mathcal L}({\mathcal H})}
\newcommand\BX{{\mathcal L}_1(X)}
\newcommand\LX{{\mathcal L}({X})}
\newcommand{\tpos}{{t \geqslant 0}}
\newcommand{\CC}{\mathbb{C}}
\newcommand{\DD}{\mathbb{D}}
\newcommand{\RR}{\mathbb{R}}
\newcommand{\TT}{\mathbb{T}}
\def\ol{\overline}
\def\ran{\mathop{\rm ran}\nolimits}
\renewcommand{\textbf}[1]{\begingroup\bfseries\mathversion{bold}#1\endgroup}
\title[The Invariant Subspace Problem]
{Recent perspectives on the Invariant Subspace Problem}
\author{I. Chalendar}
\address{Isabelle CHALENDAR, Université Gustave Eiffel, LAMA, (UMR 8050), 
    UPEM, UPEC, CNRS, F-77454, Marne-la-Vallée (France)}
\email{isabelle.chalendar@univ-eiffel.fr}
\author{J. R. Partington}
\address{Jonathan R. PARTINGTON, School of Mathematics, University of Leeds, Leeds LS2 9JT, Yorkshire, U.K.}
\email{j.r.partington@leeds.ac.uk}
\keywords{invariant subspaces, almost-invariant subspaces, universal operators}
\subjclass[2020]{47A15,47B33,47B37}
\begin{document}

\begin{abstract}
We review recent work connected with the invariant subspace problem for operators, in particular new developments in
the last 15 years. In particular, we include discussions of almost-invariant subspaces,
universal operators, specific classes of operators and new results in the framework of Banach spaces.

\end{abstract}

\maketitle
  
\section{Introduction}

The invariant subspace problem, which dates back at least to the time of von Neumann, over 75 years ago,
asks whether every bounded operator $T$ on a complex Banach space $X$ of dimension at least two has a nontrivial closed invariant subspace: that is, a closed subspace $M$ of $X$  such that $M\neq \{0\}$, $M\neq X$ and $T(M) \subseteq M$.
The invariant subspace problem is a cornerstone of operator theory, as the existence of invariant subspaces provides insight into the structure and behaviour of operators, with applications ranging from spectral theory to dynamical systems. Another related problem even more interesting and certainly more diffcult is the complete  description of the lattice of all the invariant subspaces of a given bounded operator $T$. There are an impressive number of sufficient conditions that guarantee the existence of invariant subspaces, using a wide variety of tools that contribute to attractiveness of this problem. However there are very few operators for which the lattice of invariant subspaces is perfectly described. In other words, even if an answer to the invariant subspace problem  is provided, it will not be the end of the interest in invariant subspaces, there will still be much to do!     \\

In \cite{CP13} we included a detailed history of the problem, which we shall not repeat,
although we mention also   the classic text \cite{RR03} and
the more recent book \cite{CP11} as useful references on the subject.

We note that the question was settled in the negative for Banach spaces $X$ with counterexamples
from Enflo~\cite{enflo} and Read~\cite{read84}. 
While the problem remains open for Hilbert spaces, partial results have been obtained for specific operator classes, such as compact operators and subnormal operators, which we revisit in light of recent developments.
\\

In \cite{CP13} we  examined the following topics in particular detail:

\textbullet\ universal operators;

\textbullet\ Bishop operators; and

\textbullet\ (finitely) strictly singular operators.\\

Here we shall focus on the following  topics:

\textbullet\ Section \ref{sec:2}: almost-invariant subspaces (a notion developed very recently);

\textbullet\ Section \ref{sec:3}:
universal operators again (there has been further research in this area);

\textbullet\ Section \ref{sec:4}: particular operator classes; and finally 

\textbullet\ Section \ref{sec:5}:
 the situation for Banach space operators, finishing with the
notion of  ``typical'' operators and their properties.

These topics are interconnected, offering a comprehensive view of recent progress in tackling the invariant subspace problem.
This paper in no way claims to make an exhaustive list of important recent contributions to this exciting open issue. Rather, it is  a question of showing the abundant research activity around, by detailing some remarkable directions obtained by researchers from a large number of countries.    
\\

{\em Some notation.} We write $\LX$ for the algebra of all bounded operators on
a Banach space $X$. The identity operator is denoted by $\Id$.

\section{Almost-invariant subspaces}
\label{sec:2}

Although the invariant subspace problem for Hilbert spaces does not seem to be
solved at the time of writing, there is a sense in which it is ``almost'' solved.
In 2009 Androulakis, Popov, Tcaciuc and Troitsky~\cite{APTT09} initiated the study of almost-invariant subspaces,
which we now define.

\begin{defi}\label{def:almostis}
A subspace $Y$ of a Banach space $X$ is said to be an 
{\em almost-invariant
subspace\/} for 
an operator $T$ on $X$ if there exists a finite-dimensional subspace $F$ such that $T Y \subseteq Y + F$. The
minimal dimension of such a subspace $F$ is called the 
{\em defect\/} of $Y$ for $T$. 
\end{defi}

Clearly, if $Y$ has finite dimension or codimension it is automatically
almost invariant, so our interest here lies in spaces that have infinite
dimension and codimension in $X$. These will be known as {\em half spaces}.

It turns out that the ``almost-invariant subspace problem'' is solved,
at least for Hilbert spaces, since 
 Popov and Tcaciuc \cite{PT13} proved the following theorem. Some  
 partial results 
were given earlier by
Androulakis,  Popov,  Tcaciuc and 
              Troitsky, while
              alternative proofs and 
              related results were given later by
Jung, Ko and Pearcy    \cite{JKP18}.
There are related results due to Sirotkin and Wallis \cite{SW14,SW16}.

 \begin{theo}
 Let $T$ be a bounded operator on an infinite-dimensional reflexive Banach space $X$. Then $X$
 admits an almost-invariant half space with defect 1.
 \end{theo}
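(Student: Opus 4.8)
My plan is to reduce the statement to a codimension-counting problem and then run a single forward induction. First I would reformulate the conclusion: writing $Q\colon X\to X/Y$ for the quotient map, the requirement $TY\subseteq Y+\CC f$ says exactly that $QT|_Y\colon Y\to X/Y$ has rank at most one, i.e.\ all the images $Ty \pmod Y$ lie on one line. So it suffices to produce an infinite-dimensional, infinite-codimensional closed subspace $Y$ together with a single vector $f$ such that every $Ty$ with $y\in Y$ is congruent modulo $Y$ to a scalar multiple of $f$.

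The almost-invariance itself is essentially \emph{free}. Fix any vector $f$ and build a sequence $(y_n)_{n\ge 1}$ inductively: set $V_n=\mathrm{span}\{y_1,\dots,y_n\}$, let $P_n$ be a bounded projection onto the finite-dimensional (hence complemented) space $V_n+\CC f$, and define $y_{n+1}=(\Id-P_n)Ty_n$. Then $Ty_n\in V_n+\CC f+\CC y_{n+1}\subseteq V_{n+1}+\CC f$ by construction, so with $Y=\ol{\mathrm{span}}\{y_n\}$ one gets $T(\mathrm{span}\{y_n\})\subseteq Y+\CC f$; since $Y+\CC f$ is closed (a finite-dimensional extension of a closed subspace) and $T$ is continuous, this passes to the closure and yields $TY\subseteq Y+\CC f$, i.e.\ defect at most $1$. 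Keeping the $y_n$ linearly independent (inserting a fresh independent vector from a reservoir whenever the overflow $(\Id-P_n)Ty_n$ vanishes) makes $Y$ infinite-dimensional.

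The entire difficulty is therefore to force $Y$ to have infinite \emph{codimension}. This is where reflexivity must do the work: the recursion tends to regenerate the whole cyclic subspace of $y_1$ (and the orbit of $f$), and in the worst case---$T$ with empty point spectrum and every vector cyclic, which is precisely the situation of a hypothetical counterexample to the invariant subspace problem---one cannot simply rely on a proper invariant subspace being produced. Instead I would choose $f$ and the $y_n$ so that the normalized overflow vectors $(\Id-P_n)Ty_n/\norm{(\Id-P_n)Ty_n}$ are, after passing to a subsequence, weakly convergent (using that bounded sequences in a reflexive space are relatively weakly compact), and align $f$ with that weak limit; a gliding-hump argument then lets one peel off infinitely many independent directions annihilated by a weak-$*$ convergent family of functionals, guaranteeing $Y^\perp$ is infinite-dimensional.

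The main obstacle is exactly this simultaneous control: arranging a \emph{single} overflow direction $f$ while still leaving infinitely many functionals annihilating every $y_n$. It is natural to lighten it by exploiting the self-duality of the problem. Since $X$ is reflexive, $Y\mapsto Y^\perp$ sends an almost-invariant half space of defect $\le 1$ for $T$ to one for $T^*$ (the condition $TY\subseteq Y+\CC f$ dualizes to $T^*W\subseteq Y^\perp$ on the codimension-one subspace $W=\{\psi\in Y^\perp:\psi(f)=0\}$), so one is free to run the construction for whichever of $T$ and $T^*$ makes the codimension bookkeeping more convenient.
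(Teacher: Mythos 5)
There is a genuine gap, and it sits exactly where you locate it: the infinite codimension of $Y$. Your forward recursion $y_{n+1}=(\Id-P_n)Ty_n$ does deliver $TY\subseteq Y+\CC f$ for free, as you say, but paragraph three --- the only place where reflexivity is supposed to produce infinite codimension --- is a list of hoped-for techniques rather than an argument. Two concrete problems: first, the choice of $f$ is circular, since $P_n$ projects onto $V_n+\CC f$, so the entire sequence $(y_n)$ depends on $f$; you cannot ``align $f$ with the weak limit'' of the normalized overflows after the fact without restarting the recursion and changing those overflows. Second, even granting a weakly convergent subsequence of overflows, nothing in the construction prevents $\ol{\mathrm{span}}\{y_n\}$ from being dense or of finite codimension (for a shift-type operator with $y_1$ a cyclic vector the recursion will typically sweep out a dense subspace), and the phrase ``a gliding-hump argument then lets one peel off infinitely many independent directions'' names the missing mechanism without supplying it. Weak convergence of the overflow directions gives no control whatsoever on the codimension of the closed span of the $y_n$ themselves.

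The missing idea is to get the half-space property from a \emph{basic sequence}, and to manufacture the one-dimensional overflow analytically rather than by fiat. The argument sketched in the paper (for the Hilbert-space version, following Popov and Tcaciuc) picks $\lambda\in\partial\sigma(T)$, reduces to $\lambda=0$ not an eigenvalue, and uses blow-up of the resolvents $R_n=(\lambda_n\Id-T)^{-1}$ to produce a seminormalized sequence $x_n=R_ne/\norm{R_ne}$ satisfying $(T-\lambda_n\Id)x_n=-e/\norm{R_ne}$: here the single overflow direction $e$ is forced by the formula, not imposed by projections. One then shows $x_n\to 0$ weakly (this is where the absence of an eigenvalue, and in the Banach-space setting reflexivity, enters), invokes the Kadets--Pe{\l}czy\'nski theorem to extract a basic subsequence, and takes the closed span of the \emph{even-indexed} terms: basicness is precisely what guarantees infinite codimension, since the odd-indexed terms stay uniformly away from that span. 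The genuinely hard case in the full theorem --- when the relevant boundary spectrum consists of eigenvalues --- needs a separate treatment via $T^*$, and your closing duality observation (that $TY\subseteq Y+\CC f$ dualizes, in a reflexive space, to almost-invariance of $W=\{\psi\in Y^\perp:\psi(f)=0\}$ for $T^*$ with defect $1$) is correct and is indeed part of how that case is handled; but as it stands it is the only sound piece of your codimension strategy, and it presupposes the very construction you have not carried out.
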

 
To give some flavour of the arguments used, we present the following weaker form of this theorem, and refer the interested reader to the
 original sources for   proofs of the stronger form.
 
 \begin{theo}
  Let $T$ be a bounded operator on a  Hilbert space $H$. Then 
  either $T$ has an eigenvalue (and hence an invariant subspace) or $T$
has an almost-invariant half space with defect 1.
 \end{theo}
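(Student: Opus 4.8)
The plan is to produce an almost-invariant half space with defect $1$ by exploiting the abundance of cyclic behaviour when no eigenvalue exists. The starting point is to fix any nonzero vector $x_0 \in H$ and consider the orbit $\{x_0, Tx_0, T^2x_0, \dots\}$. If the closed linear span of this orbit is not all of $H$, then it is a nontrivial closed invariant subspace; but even that is stronger than what we need, so the genuinely interesting case is when $x_0$ is a cyclic vector. I would therefore assume $T$ has no eigenvalue (so $T - \lambda$ is injective for every $\lambda$) and build the desired half space directly from an orbit.

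The key construction I would carry out is to select a sequence of vectors recursively so that the span behaves like a ``staircase''. Concretely, set $Y_0 = \overline{\operatorname{span}}\{T^n x_0 : n \geq 1\}$, i.e. the closed span of the orbit omitting $x_0$ itself. The point is that $T$ maps $\overline{\operatorname{span}}\{T^n x_0 : n \geq 0\}$ into $\overline{\operatorname{span}}\{T^n x_0 : n \geq 1\} \subseteq Y_0$, so the only vector that can ``escape'' $Y_0$ under $T$ is the image of directions lying outside $Y_0$. Since deleting one generator changes the span by at most one dimension, one expects $T Y_0 \subseteq Y_0 + F$ with $\dim F \leq 1$, giving defect at most $1$. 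The two facts I must then verify are that $Y_0$ has both infinite dimension and infinite codimension, so that it is a genuine half space rather than trivially (co)finite-dimensional.

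\textbf{The main obstacle} will be controlling the codimension and dimension of $Y_0$ simultaneously. If $x_0$ is cyclic, then $\overline{\operatorname{span}}\{T^n x_0 : n \geq 0\} = H$, and the subtle issue is whether removing the single generator $x_0$ from the orbit genuinely drops the codimension to something infinite, or whether $Y_0$ already equals $H$ (codimension $0$) or has some awkward finite codimension. To handle this I would split into cases according to $\dim(H / Y_0)$: the favourable case is when this codimension is infinite, in which case $Y_0$ is the sought half space; otherwise one exploits the absence of eigenvalues together with injectivity of shifts on the orbit to refine the construction. A clean way to force infinite codimension is to thin out the orbit, keeping only a subsequence $\{T^{n_k} x_0\}$ chosen so that the gaps guarantee infinitely many independent directions remain outside the span, while the shift structure still confines the image of $T$ to within one dimension of the span.

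A cleaner alternative, which I would present if the staircase bookkeeping becomes unwieldy, is to reduce to a unilateral-shift model. Using the no-eigenvalue hypothesis, pick $x_0$ and define the coordinates of the orbit to identify a subspace on which $T$ acts as (a compression of) a weighted forward shift; a forward shift $S$ on $\ell^2$ manifestly has the half space $Y = \overline{\operatorname{span}}\{e_n : n \geq 1\}$ satisfying $SY \subseteq Y$ exactly, and passing to a perturbation that accounts for the compression costs at most one dimension, which is precisely the defect $1$. In either route the essential input is the same: \emph{without an eigenvalue, $T$ behaves enough like a shift that a one-sided piece of an orbit is almost invariant with the escaping part confined to a single dimension}, and the real work lies in verifying the half-space property rather than in the almost-invariance itself.
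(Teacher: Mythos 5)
There is a genuine gap at the heart of your construction, located exactly where you place the ``main obstacle'': the tension between codimension and defect cannot be resolved by orbit-based spans. First note that your $Y_0 = \overline{\operatorname{span}}\{T^n x_0 : n \geq 1\}$ is in fact \emph{exactly} invariant ($TY_0 \subseteq Y_0$, defect $0$), so almost-invariance is never the issue; the issue is that when $x_0$ is cyclic, $Y_0 + \CC x_0$ is closed (a finite-dimensional extension of a closed subspace) and contains the full orbit span, hence equals $H$, so $Y_0$ has codimension at most $1$ and is never a half space. Your proposed repair --- thinning the orbit to $\{T^{n_k}x_0\}$ --- destroys the very shift structure you rely on: $T$ sends $T^{n_k}x_0$ to $T^{n_k+1}x_0$, which lies outside the span of the sparse subsequence, and these escaping directions vary with $k$; to have $TZ \subseteq Z + F$ the space $F$ would have to absorb infinitely many independent directions, so the defect is unbounded rather than $1$. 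The same objection defeats your shift-model alternative: knowing that a compression $P_Y T|_Y$ acts like a shift says nothing about $TY \subseteq Y + F$, since the off-space component $(\Id - P_Y)T|_Y$ can have infinite-dimensional range, and ``costs at most one dimension'' is asserted rather than proved. Finally, your opening dichotomy is misaligned with the statement: in the non-cyclic case you obtain a nontrivial invariant subspace, but the theorem's no-eigenvalue alternative demands an almost-invariant \emph{half space} (infinite dimension \emph{and} codimension), and an orbit span of finite codimension satisfies neither alternative of the dichotomy.

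What is missing is a mechanism forcing all the escape into a \emph{single fixed direction}, and the paper obtains it from spectral theory rather than from orbits. Choosing $0 \in \partial\sigma(T)$ not an eigenvalue and $\lambda_n \to 0$ in the resolvent set, the resolvents $R_n = (\lambda_n \Id - T)^{-1}$ blow up in norm, so by uniform boundedness there is a single vector $e$ with $\|R_n e\| \to \infty$; setting $x_n = R_n e/\|R_n e\|$ yields the exact identity $Tx_n = \lambda_n x_n - e/\|R_n e\|$. Consequently $T$ maps the closed span of \emph{any} subcollection of the $x_n$ into that span plus $\CC e$ --- defect at most $1$ by construction, no matter how the collection is thinned. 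The half-space property then comes essentially for free: since $T$ has no eigenvalue, the weak limit of $(x_n)$ must be a kernel vector and hence zero, so $(x_n)$ is weakly null with unit norms, the Kadets--Pelczynski theorem provides a basic subsequence, and the closed span of its even-indexed terms has both infinite dimension and infinite codimension. Your approach never produces such a common error direction, which is why the bookkeeping between codimension and defect cannot be made to close.
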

 
 \begin{proof}
 Clearly, we may suppose without loss of generality that $H$ is infinite-dimensional.
 
 If $T$ has no eigenvalues, then there is a point $\lambda \in \partial\sigma(T)$
 that is not an eigenvalue.
 Note that if $T$ has an almost-invariant half space $Y$ of defect 1, that is,
 if $T(Y) \subseteq Y+F$, then $(T-\mu \Id)(Y) \subseteq Y+F$ for every $\mu \in \CC$.
 Therefore, to prove the theorem, we may suppose without loss of generality
 that $\lambda=0$; i.e., $0 \in \partial\sigma(T)$ and $0$ is not an eigenvalue.
 
 We may therefore find a sequence $(\lambda_n) $ in $\CC \setminus \sigma(T)$,
 with $\lambda_n \to 0$. Now we have $\|(\lambda_n \Id-T)^{-1}\| \to \infty$.
 We write $R_n=(\lambda_n \Id-T)^{-1}$ and observe that, by the uniform
 boundedness principle there is an $e \in X$ such that $\|e\|=1$ and $\|R_n e\| \to \infty$.
 Let us write $x_n=R_n e/\|R_n e\|$ for each $n$.
 
Thus we have
\begin{equation}\label{eq:txn}
(T-\lambda_n \Id)x_n= -e/\|R_n e\| .
\end{equation}
 As a bounded sequence in a Hilbert space, $(x_n)$ has a weakly convergent subsequence,
 and so, by passing to a subsequence and relabelling we may suppose that $x_n \xrightarrow{w} z \in H$.
 Now, since $\lambda_n \to 0$ and $\|R_n e \| \to \infty$, it follows from \eqref{eq:txn}
 that
$ \| Tx_n \|\to 0$,
 and so 
 \[
 \langle Tz,y \rangle =\langle  z,T^*y\rangle = \lim \langle x_n , T^*y \rangle =0
 \] for all $y \in H$,
 which implies that $Tz=0$. But $0$ is not an eigenvalue and so $z=0$.
 
 That is, $ x_n \xrightarrow{w} 0$, and so, by a classical theorem of Kadets and Pelczynski \cite{AK16,KP65}, $(x_n)$
 contains a basic subsequence; by passing to this subsequence we may suppose
 without loss of generality
 that $(x_n)$ is itself  a basic sequence.
 
 We now have an almost-invariant half space of defect 1, namely the closed linear span
 of $\{x_{2n}: n \ge 1 \}$. The corresponding space $F$ can be taken to be the linear span of $e$.
\end{proof}
 
Let us give an important example.
Recall that the Hardy space $H^2$
is the space of holomorphic functions on the open unit disc $\DD$ in th copmplex plane $\CC$ whose Taylor coefficients $(a_n)_{n\geq 0}$ satisfy $\sum_{n\geq 0}|a_n|^2<\infty$. 

\begin{expl}
It follows from Beurling's classical theorem that
in the Hardy space $H^2$ the model spaces $K_\theta:=H^2 \ominus \theta H^2$ with $\theta$ an inner function are the
invariant subspaces for the backward shift operator $S^*$ defined by $S^*  f(z) = \dfrac{f(z)-f(0)}{z}$.
Writing $T_G$ for the Toeplitz operator $f \mapsto P_{H^2} Gf$ with $G \in L^\infty(\TT)$
and $P_{H^2}: L^2(\TT) \to H^2$ the orthogonal (Riesz) projection,
we have that $K_\theta = \ker T_{\ol\theta}$. Here $\TT$ denotes the unit circle in $\CC$. 

In general Toeplitz kernels are not invariant under $S^*$, but we can show directly that
they are almost invariant with defect 1. 
In the nontrivial case there is a function $f \in \ker T_G$ with $f(0)=1$.
Now for any $g \in \ker T_G$ we can write $g=\lambda f+g_0$ where
$\lambda=g(0)$ and $g_0 \in \ker T_G$ with $g_0(0)=0$.
So $S^* g = \lambda S^*f + S^* g_0$.
But $S^* g_0  \in \ker T_G$ since $G S^* g_0 = G \ol z g_0 = \ol z (G g_0) \in \ol{H^2_0}=L^2(\TT) \ominus H^2$.

Hence $S^* (\ker T_G) \subseteq \ker T_G + \CC S^*f$; this is near invariance with defect $1$.
\end{expl}

Almost invariance for the backward shift is of particular interest as it is closely
related to the concept of near invariance, which goes back to Hayashi~\cite{hayashi}, Hitt~\cite{hitt88} and Sarason~\cite{sarason}.
A subspace $M \subset H^2$ is {\em nearly invariant\/} if whenever $f \in M$ with $f(0)=0$
we have $S^* f \in M$.  That is, we can divide out zeros without leaving the space.
The paper \cite{CGP20} describes the almost-invariant subspaces for the backward shift as well as those
subspaces that are nearly invariant with a defect: an extension to   vector-valued Hardy spaces can be found in \cite{CDP20}.\\

By considering perturbations of an operator, it is possible to go from almost-invariant subspaces to invariant subspaces.
Many years ago Brown and Pearcy~\cite{BP71} showed that every Hilbert space operator has
a compact perturbation with an invariant half-space.
Tcaciuc \cite{T19,T22} has recently gone much further, showing the following result.

\begin{theo}
Let $T$ be a bounded operator on a Banach space. Then there exists a rank-one operator $F$ such that $T+F$ has an invariant half-space. In addition one can choose the  operator $F$ to be of arbitrarily small norm. 
\end{theo}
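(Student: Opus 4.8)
The plan is to reduce the statement to the construction of an almost-invariant half space of defect $1$ on which a certain ``defect coefficient'' can be made small, and then to turn such a space into a genuinely invariant one by a suitable rank-one perturbation. First I would record the easy conversion step. Suppose $Y$ is an almost-invariant half space with $TY\subseteq Y+\CC e$, and let $q\colon X\to X/Y$ be the quotient map. Since $q\circ T|_{Y}$ is bounded with range contained in the one-dimensional space $\CC\,q(e)$, there is a bounded functional $\psi$ on $Y$ with $q(Ty)=\psi(y)\,q(e)$ for all $y\in Y$. If $q(e)=0$, i.e. $e\in Y$, then $TY\subseteq Y$ and $Y$ is already invariant, so $F=0$ works. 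Otherwise I extend $\psi$ to $\widetilde\psi$ on $X$ with $\norm{\widetilde\psi}=\norm{\psi}$ by Hahn--Banach and set $Fx=-\widetilde\psi(x)\,e$; then $F$ has rank one and $(T+F)y=Ty-\psi(y)e\in Y$ for every $y\in Y$, so $Y$ is invariant for $T+F$. As $\norm{F}=\norm{e}\,\norm{\psi}$, the whole problem becomes one of producing $Y$ with $\norm{\psi}$ small.

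For the construction I would mimic the resolvent argument of the previous proof, which survives the passage from Hilbert to Banach space with one essential change. Since adding a scalar multiple of the identity alters neither $F$ nor the invariant subspaces of $T+F$, I may shift so that $0\in\partial\sigma(T)$. Choosing $\lambda_n\to 0$ in the resolvent set with $\norm{R_n}\to\infty$, where $R_n=(\lambda_n\Id-T)^{-1}$, the uniform boundedness principle gives a unit vector $e$ with $\norm{R_n e}\to\infty$. Setting $x_n=R_n e/\norm{R_n e}$ yields unit vectors with $Tx_n=\lambda_n x_n-\delta_n e$ and $\delta_n=1/\norm{R_n e}\to 0$. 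The crucial feature is that every defect vector points in the single fixed direction $e$, exactly as the conversion step requires.

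The delicate step — and the place where a Hilbert structure was used before — is the extraction of a basic subsequence. In a Hilbert space one passes to a weakly convergent subsequence and applies the Kadets--Pe\l czy\'nski theorem; in a general Banach space no such weak compactness is available. Here I would note that, in the generic case, $(x_n)$ has no norm-convergent subsequence, since a norm limit $x$ would satisfy $Tx=0$ and make $0$ an eigenvalue. Assuming for the moment that $0$ is not an eigenvalue, $(x_n)$ then admits a basic subsequence by a selection principle of Bessaga--Pe\l czy\'nski type. Passing to it, and thinning further so that $\sum_n\delta_n$ is as small as we please, I take $Y=\overline{\operatorname{span}}\{x_{2n}:n\ge1\}$; the odd-indexed vectors are linearly independent modulo $Y$, so $Y$ has infinite dimension and codimension and is a half space. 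Since $x_{2n}\in Y$ we get $TY\subseteq Y+\CC e$ with $\psi(x_{2n})=-\delta_{2n}$, and estimating $\psi$ through the bounded coordinate functionals of the basic sequence (basis constant $K$) gives $\norm{\psi}\le 2K\sum_n|\delta_{2n}|<\varepsilon$, whence $\norm{F}<\varepsilon$.

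The main obstacle is precisely this extraction, together with the situation it excludes, namely when the approximate eigenvectors converge in norm and $0$ is genuinely an eigenvalue. I would dispose of the existence of eigenvalues structurally rather than by the template above. If some point of $\partial\sigma(T)$ is not an eigenvalue, the construction applies verbatim at that point. If every boundary point is an eigenvalue, then either $T$ has infinitely many distinct eigenvalues — in which case the closed span of an infinite subfamily of eigenvectors is already an invariant half space and $F=0$ suffices — or $\partial\sigma(T)$, and hence $\sigma(T)$, is finite, so that the Riesz spectral projections split $X$ into closed $T$-invariant summands and reduce the problem either to an infinite-codimensional summand or to a single-point-spectrum block. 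This last, essentially quasinilpotent, case is where the real weight of the argument lies. Finally, the quantitative ``arbitrarily small norm'' assertion comes for free: it is secured simply by thinning the basic subsequence until $\sum_n|\delta_{2n}|$ is below the prescribed tolerance.
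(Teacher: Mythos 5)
Your conversion step is correct, and it is indeed the standard reduction: given an almost-invariant half space $Y$ with $TY\subseteq Y+\CC e$ and $e\notin Y$, the functional $\psi$ defined by $q(Ty)=\psi(y)\,q(e)$ extends by Hahn--Banach, and $Fx=-\widetilde\psi(x)e$ is a rank-one perturbation of norm $\norm{\psi}\norm{e}$ making $Y$ invariant. Note first that the paper offers no proof of this theorem---it quotes it from Tcaciuc \cite{T19,T22}---so your attempt must stand on its own, and it has a genuine gap exactly where the general Banach-space case is hard: the extraction step. It is false that a seminormalized sequence with no norm-convergent subsequence admits a basic subsequence. Take $x_n=x_0+z_n$ with $(z_n)$ weakly null and seminormalized: no subsequence converges in norm, yet every subsequence converges weakly to $x_0\neq 0$, and a weakly convergent basic sequence is necessarily weakly null (the coordinate functionals annihilate the weak limit), so no subsequence is basic. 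The Bessaga--Pe\l czy\'nski/Kadets--Pe\l czy\'nski selection principles \cite{KP65,AK16} require weak nullity, or at least that $0$ lie in the weak closure of $\{x_n\}$. In your construction that is available precisely when $(x_n)$ has \emph{some} weakly convergent subsequence, whose limit lies in $\ker T$ and hence is $0$ when $0$ is not an eigenvalue---which is why the Hilbert argument in Section 2 works and why the Popov--Tcaciuc theorem \cite{PT13} is stated for \emph{reflexive} $X$. In a general Banach space $(x_n)$ may have no weakly convergent subsequence at all; Rosenthal's dichotomy leaves the nontrivially weakly Cauchy case, and the natural repair of passing to differences $x_{n_{k+1}}-x_{n_k}$ (to regain weak nullity) destroys the single-defect-direction structure, since $Tx_n=\lambda_nx_n-\delta_ne$ with \emph{distinct} $\lambda_n$ produces extra terms $(\lambda_{n_{k+1}}-\lambda_{n_k})x_{n_k}$ outside $Y+\CC e$. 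Tcaciuc's actual proof circumvents this obstruction by dualizing: in $X^*$ weak* compactness is always available, weak*-basic subsequences can be extracted (a Johnson--Rosenthal-type selection, after a separable reduction), and the half space is constructed in the dual and pulled back via pre-annihilators.

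The second soft spot is the eigenvalue-saturated case. The closed span of infinitely many eigenvectors for distinct eigenvalues is invariant and infinite-dimensional, but you give no reason it has infinite \emph{codimension}; the statement ``infinitely many eigenvalues implies an invariant half space'' is a genuine lemma in this literature, itself proved by extracting basic subsequences from eigenvector sequences or by dual arguments, not by fiat. And after the Riesz decomposition you explicitly concede the single-point-spectrum block (``where the real weight of the argument lies''), i.e.\ an eigenvalue plus quasinilpotent part---which is precisely a remaining hard case, not a removable formality. Consequently the quantitative claim is also not ``for free'': your $\sum_n\delta_{2n}$-thinning device secures $\norm{F}<\varepsilon$ only in the branch you actually constructed, while the small-norm assertion must be established in all branches (this is the separate content of \cite{T22}). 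In sum, your outline faithfully reproduces the reflexive-space mechanism and the correct perturbation step, but the two points you pass over---basic-sequence selection without weak compactness, and the eigenvalue-rich/quasinilpotent cases---are exactly the substance of the theorem.
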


\section{Universal operators}
\label{sec:3}

\subsection{A model for every bounded operator}

An operator $U$ on a Banach space $X$ is said to be {\em universal\/} if it models every bounded
operator on $X$: more precisely, if for every operator $T$ on $X$ there is a 
closed subspace $X_0 \subseteq X$ with $U(X_0) \subseteq X_0$, an
 isomorphism $\phi$ of $X$ onto $X_0$ and a scalar
$\lambda \ne 0$ such that $\lambda T= \phi^{-1}U_{|X_0}\phi$. 
This notion goes back to Rota \cite{rota59,rota60}, who gave the example of the
backward shift of infinite multiplicity.

Caradus \cite{caradus} gave a simple criterion for an operator to be universal,
namely (as originally stated) that $U$ should have a right inverse and that its kernel
should contain a subspace isomorphic to $X$.  

This concept is most commonly applied in the context of Hilbert spaces, in which
case the criterion may be stated simply as follows. 

\begin{theo}
	Let $U\in \LH$ where $\GH$ is  complex Hilbert space. Then $U$ is universal if $\{x\in \GH:Ux=0\}$ is infinite dimensional and $U\GH=\GH$.
\end{theo}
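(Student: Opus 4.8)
The plan is to exhibit, for an arbitrary $T \in \LH$, an explicit similarity realising a scalar multiple of $T$ as a restriction of $U$ to an invariant subspace. The two hypotheses will be used in complementary ways: surjectivity of $U$ will furnish a bounded \emph{right} inverse, and the infinite-dimensionality of the kernel will provide the room needed to embed a copy of $\GH$ on which to build the model.

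First I would produce a bounded right inverse $V$ of $U$. Since $U\GH = \GH$, the operator $U$ is surjective, so by the open mapping theorem (or concretely, since $UU^*$ is then invertible, via $V = U^*(UU^*)^{-1}$) there is $V \in \LH$ with $UV = \Id$. Next, because $\ker U$ is an infinite-dimensional closed subspace of the (separable) Hilbert space $\GH$, I would fix an isometric embedding $J : \GH \to \ker U$, so that $\norm{Jx} = \norm{x}$ for all $x$ and $UJ = 0$.

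The heart of the argument is the construction of the intertwiner. Given $T$, set $A = \lambda T$ and choose $\lambda \neq 0$ small enough that $\norm{V}\,\norm{A} < 1$; then define
\[
\phi = \sum_{n=0}^{\infty} V^n J A^n,
\]
the series converging in operator norm since its $n$-th term has norm at most $\norm{V}^n\norm{A}^n$. A direct computation then yields the intertwining relation: using $UJ = 0$ to annihilate the $n=0$ term and $UV = \Id$ to reindex the remaining terms, one finds $U\phi = \phi A = \lambda\,\phi T$. Consequently $X_0 := \ran\phi$ satisfies $U(X_0) \subseteq X_0$, and on it $\lambda T = \phi^{-1} U_{|X_0} \phi$, which is exactly the required similarity.

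The step I expect to be the main (though essentially routine) obstacle is verifying that $\phi$ is genuinely an isomorphism onto a \emph{closed} subspace, rather than merely a bounded intertwiner. Here I would isolate the leading term, writing $\phi = J + P$ with $P = \sum_{n\geq 1} V^n J A^n$, so that $\norm{P} \leq \norm{V}\norm{A}/(1 - \norm{V}\norm{A})$. Shrinking $\lambda$ further so that $\norm{P} < 1$ and using that $J$ is isometric gives $\norm{\phi x} \geq (1 - \norm{P})\norm{x}$, so $\phi$ is bounded below; hence $\phi$ is injective with closed range and is therefore an isomorphism of $\GH$ onto the closed $U$-invariant subspace $X_0$. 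This completes the verification that $U$ is universal.
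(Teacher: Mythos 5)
Your proof is correct and is precisely the classical Caradus argument: a bounded right inverse $V$ of $U$, an isometric embedding $J$ of $\GH$ into $\ker U$, the Neumann-series intertwiner $\phi=\sum_{n\geq 0}V^nJ(\lambda T)^n$ satisfying $U\phi=\lambda\phi T$, and the perturbation estimate $\phi=J+P$ with $\norm{P}<1$ showing $\phi$ is bounded below, hence an isomorphism onto a closed $U$-invariant subspace. The paper states this theorem without proof, citing Caradus \cite{caradus}, and your argument coincides with that standard proof; the only caveat worth noting is that the embedding $J$ requires the Hilbert dimension of $\ker U$ to be at least that of $\GH$, which is automatic in the separable setting tacitly intended by the statement (and which you quietly assumed with your parenthetical ``separable'').
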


The notion of universality has stimulated much research, since
if we can find a universal operator for which we have a full description of the
lattice of invariant subspaces, we can solve the invariant subspace problem by
determining whether there is an infinite-dimensional invariant subspace that does
not contain any smaller invariant subspaces except $\{0\}$.\\

Many examples of universal operators $U$  are detailed in  \cite{CP11,CP13}. The examples are related  to  (weighted)  shifts or (weighted) composition operators. Their universality is proved using Caradus's theorem. The necessary condition to be universal, namely  ``$\{ x\in\GH:Ux=0\}$ is infinite dimensional'' is usually quite easy to check. Most of the time, the difficult part consists in verifying the sufficient condition, that is the surjectivity of $U$.   

The link between the invariant subspace problem and composition operators on the Hardy space $H^2$ follows from  \cite{NRW87}. In this beautiful paper, E. Nordgren, P. Rosenthal and F. S. Wintrobe  proved the universality of $C_\varphi -\Id$ on $H^2$ where $C_\varphi (f)=f\circ \varphi$ and $\varphi$ is the hyperbolic automorphism defined by $\varphi(z)=\frac{z+1/2}{1+z/2} $.  Their proof is detailed in \cite{CP13} and an alternative proof is  given in \cite{CG17} involving groups of Toeplitz operators.

%\textbullet Donner des examples d'opérateurs universels avant  Nordgren Rosenthal Wintrobe 1987 en premier. 

%\textbullet\ 
	In \cite{pozzi12}, using the theory of semi-Fredholm operators, E. Pozzi obtained the following generalization of Caradus's theorem: 
\begin{theo}\label{th:caradus}
	An operator $U\in\LH$, where $\GH$ is a complex Hilbert space, is universal if its kernel is of infinite dimension  and the codimension of its range is finite.  
\end{theo}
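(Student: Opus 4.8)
The plan is to deduce this from Caradus's theorem above by cutting $\GH$ down to a closed, infinite-dimensional, $U$-invariant subspace $\GH_1$ on which $U$ restricts to a \emph{surjection} that still has infinite-dimensional kernel. Granting such a $\GH_1$, Caradus's theorem applies to $U_{|\GH_1}\in\mathcal L(\GH_1)$ and makes it universal on $\GH_1$, and universality then transfers to $U$ by a purely formal argument: given $T\in\LH$ and a unitary $\iota\colon\GH\to\GH_1$, apply the universality of $U_{|\GH_1}$ to $\iota T\iota^{-1}$ to obtain a closed $U_{|\GH_1}$-invariant subspace $X_0\subseteq\GH_1$, an isomorphism $\psi\colon\GH_1\to X_0$ and a scalar $\mu\neq0$ with $\mu\,\iota T\iota^{-1}=\psi^{-1}U_{|X_0}\psi$. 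Since $X_0\subseteq\GH_1$ we have $U(X_0)=U_{|\GH_1}(X_0)\subseteq X_0$, so $X_0$ is $U$-invariant in $\GH$, and $\phi:=\psi\iota$ then satisfies $\mu T=\phi^{-1}U_{|X_0}\phi$. Thus the entire difficulty is concentrated in producing $\GH_1$.

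A preliminary remark is that the hypotheses already make $U$ a lower semi-Fredholm operator. Indeed, writing $\GH=\ran U\oplus E$ with $\dim E=\operatorname{codim}\ran U<\infty$, the bounded surjection $\GH\oplus E\to\GH$, $(x,e)\mapsto Ux+e$, is open, and one reads off that $\ran U$ is closed; the same reasoning shows that each power $U^k$ has closed range. Together with $\dim\ker U=\infty$, this places $U$ in the genuinely semi-Fredholm setting of infinite index, which is exactly where Pozzi deploys semi-Fredholm theory.

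To construct $\GH_1$ I would invoke the Kato decomposition of a semi-Fredholm operator: there exist closed $U$-invariant subspaces $M,N$ with $\GH=M\oplus N$, $\dim N<\infty$, $U_{|N}$ nilpotent, and $U_{|M}$ semi-regular (closed range with $\ker U_{|M}\subseteq\bigcap_{k\ge1}\ran(U_{|M}^{k})$). Because $N$ is finite-dimensional while $\ker U=\ker U_{|M}\oplus\ker U_{|N}$ is infinite-dimensional, $\ker U_{|M}$ is still infinite-dimensional, and $U_{|M}$ inherits a finite-codimensional range. I would then take $\GH_1$ to be the analytic core of $U_{|M}$ (equivalently, for a semi-regular operator, the hyperrange $\bigcap_{k\ge1}\ran(U_{|M}^{k})$). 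By the very definition of the analytic core this is the largest subspace on which $U_{|M}$ acts onto itself, so $U(\GH_1)=\GH_1$; it is closed by Mbekhta's theorem for semi-regular operators; semi-regularity gives $\ker U_{|M}\subseteq\GH_1$, so $\ker(U_{|\GH_1})=\ker U_{|M}$ remains infinite-dimensional; and $\GH_1$, containing this infinite-dimensional kernel, is itself infinite-dimensional and hence (assuming, as is standard in this context, that $\GH$ is separable, so that every closed infinite-dimensional subspace is isomorphic to $\GH$) isomorphic to $\GH$.

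The formal reduction and the lifting of universality are routine, so I expect the main obstacle to be precisely the two structural facts about the semi-regular part: that $U_{|M}$ maps its analytic core \emph{onto} itself and that this core is \emph{closed}. Concretely, surjectivity amounts to showing that for $y\in\GH_1$ the nested nonempty affine solution sets $\{x\in\ran(U_{|M}^{k}):U_{|M}x=y\}$ have a common point, which requires a uniform bound on their minimal-norm elements; this uniformity is exactly what the semi-Fredholm hypothesis (stabilisation of the range and kernel defects of the powers $U^k$) provides, and it is the technical heart of the argument. Once these facts are in hand, Caradus's theorem applied to $U_{|\GH_1}$, together with the transfer of the first paragraph, completes the proof.
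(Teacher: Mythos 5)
Your proof is correct, but note first that the paper itself does not prove this theorem: it is quoted from Pozzi \cite{pozzi12}, whose argument the paper describes only as ``using the theory of semi-Fredholm operators''. Your proposal is a sound realization of exactly that approach, and its skeleton --- restrict $U$ to a closed, infinite-dimensional, invariant subspace $\GH_1$ of hyperrange type on which $U$ acts \emph{onto} with infinite-dimensional kernel, apply Caradus there, and transfer universality back through a unitary $\GH\to\GH_1$ (your transfer computation is fine, as is your observation that finite algebraic codimension of $\ran U$ forces $\ran U$, and likewise each $\ran (U^k)$, to be closed) --- is the standard route. One comparison worth making: the full Kato decomposition $\GH=M\oplus N$ plus Mbekhta's closedness theorem for the analytic core is somewhat more machinery than the situation requires. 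Since $\mathrm{codim}\,\ran (U^k)\le k\,\mathrm{codim}\,\ran U<\infty$, the hyperrange $\GH_1=\bigcap_{k\ge 1}\ran(U^k)$ of $U$ itself is automatically closed as an intersection of closed subspaces; Kato's stabilization lemma for semi-Fredholm operators (the sequence $\ker U\cap\ran(U^k)$ is eventually constant) then gives $U(\GH_1)=\GH_1$ directly --- this is precisely the ``uniform bound on minimal-norm elements'' you correctly identify as the technical heart, and it settles your nested affine solution sets by making them eventually equal --- while $\ker U\cap\ran(U^k)$ has finite codimension in $\ker U$, so $\ker U\cap\GH_1$ is still infinite-dimensional. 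This avoids the detour through the semi-regular part $U_{|M}$ and the analytic core altogether, though your route is equally valid given the standard facts you cite ($T(K(T))=K(T)$, and closedness of the core for semi-regular operators). Finally, your separability caveat is apt: both Caradus's criterion as stated and this generalization implicitly concern separable $\GH$, since one needs every closed infinite-dimensional subspace (in particular $\GH_1$ and $\ker U$) to be isomorphic to $\GH$.
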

%$U$ is universal as soon as its kernel is of infinite dimension (a necessary condition anyway) and the codimension of %its range is finite.
  Pozzi  applied this criterion to provide  many examples of universal operators defined as  translations of weighted composition operators on the Hilbert space $L^2(0,1)$  as well as the Sobolev space $W_0^1$. 

In \cite{cn22}, J.R. Carmo and S.W. Noor proved that the composition operator $C_\phi$ associated with the affine  symbol $\phi(z)=(z+1)/2$ on the Hardy space $H^2$ is such that $C_\phi-\Id$ is universal. In other words,   a complete description of the closed invariant subspaces of $C_\phi$ on $H^2$ would solve the famous invariant subspace problem on Hilbert space.   
More general results from this paper lead to a proof that, with $C$ the Ces\`aro operator on $H^2$,
there is an analytic function of $C^*$ that is universal \cite{GPR25}.

%\textbullet\ 	%\cite{CG16,CG22}  Carl and Eva have universal operators commuting with compact operators.
In \cite{CG16}, C. Cowen and E. Gallardo-Guti\'errez proved that there exists a universal operator that commutes with a quasi-nilpotent injective compact operator with dense range. More exactly, this universal operator is  the adjoint of some analytic Toeplitz operator on the   Hardy space $H^2$.  This provide the existence of invariant subspaces in some particular cases, even though a universal operator is a sort of ``model'' of any operator and every operator commuting with a compact operator has a non trivial invariant subspace.     

In \cite{CG22}, the same authors exhibit an analytic Toeplitz operator whose adjoint is universal and which commutes with a quasinilpotent, injective, compact operator with dense range; unlike other examples, it acts on the Bergman space instead of the Hardy space and this operator is associated with a “hyperbolic” composition operator.

%\textbullet\ 
	In \cite{ST18}, R. Schroderus and H.O. Tylli \cite{ST18} looked more closely at some fundamental properties of the class of universal operators. In particular thy derived spectral theoretic consequences of universality which can be used to verify that a given operator is not universal. 
More precisely they proved the following result. 
\begin{theo}
For any universal operator $U\in\LH$, there exists $r>0$ such that the kernel of $U-\lambda \Id$
is of infinite dimension whenever $|\lambda|<r$. That is, the multiplicity of each eigenvalue of   $U$ in the disc $\{\lambda \in \CC: |\lambda|<r \}$ is of infinite multiplicity. 
\end{theo}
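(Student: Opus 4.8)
The plan is to exploit universality by feeding $U$ a single, carefully chosen test operator whose spectral picture already exhibits exactly the phenomenon we want, and then to transport that picture back to $U$ through the similarity furnished by universality. The natural candidate is the backward shift of infinite multiplicity, the prototypical universal operator (going back to Rota), whose point spectrum is a full disc of infinite-multiplicity eigenvalues.

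First I would fix such a model on $\GH$ itself. Since $\GH$ is infinite-dimensional we may write $\GH \cong \bigoplus_{n \geq 0} \mathcal{K}$ with $\mathcal{K}$ an infinite-dimensional Hilbert space, and let $B \in \LH$ be the backward shift $B(x_0, x_1, x_2, \dots) = (x_1, x_2, \dots)$, which is a contraction. A direct computation shows that for $\mu \in \DD$ the equation $Bx = \mu x$ forces $x = (\mu^n x_0)_{n \geq 0}$ with $x_0 \in \mathcal{K}$ arbitrary, and every such vector lies in $\GH$ because $|\mu|<1$. Hence $\ker(B - \mu \Id) \cong \mathcal{K}$ is infinite-dimensional for each $\mu \in \DD$, so $B$ already realises the sought behaviour, with radius $1$.

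Next I would apply the definition of universality to the operator $T = B$. This produces a nonzero scalar $\lambda_0$, a closed $U$-invariant subspace $X_0 \subseteq \GH$, and an isomorphism $\phi$ of $\GH$ onto $X_0$ with $\lambda_0 B = \phi^{-1} U_{|X_0} \phi$; equivalently $U_{|X_0}\,\phi = \phi\,(\lambda_0 B)$. Applying this identity to an eigenvector $v$ of $B$ associated with $\mu \in \DD$ yields $U(\phi v) = U_{|X_0}(\phi v) = \lambda_0\mu\,\phi v$, so $\phi$ maps $\ker(B - \mu\Id)$ injectively into $\ker(U - \lambda_0\mu\Id)$. Since $\phi$ is an isomorphism and its source is infinite-dimensional, $\ker(U - \lambda_0\mu\Id)$ is infinite-dimensional as well.

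Finally, as $\mu$ runs over the open unit disc, $\lambda_0\mu$ runs over $\{z \in \CC : |z| < |\lambda_0|\}$, so setting $r = |\lambda_0| > 0$ gives the claim: every $\lambda$ with $|\lambda| < r$ is an eigenvalue of $U$ of infinite multiplicity. The one real decision in the argument is the choice of the test operator; once one thinks to model an operator that itself carries a whole disc of infinite-multiplicity eigenvalues, the remainder is a routine transport of eigenvectors through $\phi$. The only point requiring care is that the scalar $\lambda_0$ is delivered by universality and is not under our control — but this is harmless, since we need only that it be nonzero in order to obtain a disc of positive radius.
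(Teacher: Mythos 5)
Your proof is correct. Note that the paper itself does not prove this theorem: it is stated as a quoted result of Schroderus and Tylli \cite{ST18}, so there is no in-paper argument to compare against. Your route --- feeding the definition of universality the backward shift $B$ of infinite multiplicity, whose open unit disc consists entirely of infinite-multiplicity eigenvalues, and transporting eigenvectors through the intertwining $U\phi = \phi(\lambda_0 B)$ on $X_0$ to get $r=|\lambda_0|>0$ --- is the natural one and is essentially the spectral-transport argument of the cited source. Two small points you handle correctly but could make explicit: the definition of universality guarantees $\lambda_0\neq 0$, which is exactly what makes $r>0$; and the decomposition $\GH\cong\bigoplus_{n\geq 0}\mathcal{K}$ requires $\GH$ infinite-dimensional, which is automatic here since a universal operator must model the zero operator, forcing $\ker U$ to contain an isomorphic copy of $\GH$ (and in any case the theorem's conclusion is vacuous in finite dimensions).
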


Recently, in \cite{ MNS23}, J. Manzur, W. Noor and C.F. Santos considered the following discrete semigroup $(W_n)_{n\geq 1}$ on the Hardy space $H^2$, defined by
\[W_nf(z)=(1+z+\cdots + z^{n-1})f(z^{n}).\] They proved the following result. 
\begin{theo}
	For each $n\geq 2$ the operator $W_n^*\in {\mathcal L}(H^2)$  is universal and therefore a positive solution to the invariant subspace probleme on complex Hilbert space is equivalent to proving that each maximal closed invariant subspace of $W_2$ is of codimension $1$.     
\end{theo}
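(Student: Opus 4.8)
The plan is to establish the two assertions in turn: that $W_n^*$ meets the hypotheses of Pozzi's criterion (Theorem \ref{th:caradus}), and that the stated equivalence then follows by a purely lattice-theoretic duality. First I would analyse $W_n$ on the orthonormal basis $(z^k)_{k\geq 0}$ of $H^2$. A direct computation gives $W_n z^k = z^{nk}+z^{nk+1}+\cdots+z^{nk+n-1}$, so that distinct images $W_n z^k$ have disjoint supports with respect to the basis. Hence $\scal{W_n z^k}{W_n z^{k'}} = n\,\delta_{kk'}$, which polarises to $W_n^*W_n = n\,\Id$; in particular $W_n$ is injective with closed range, and $W_n^*$ is surjective because every $x\in H^2$ equals $\tfrac1n W_n^*(W_n x)$. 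Next I would identify $\ker W_n^* = (\ran W_n)^\perp$ as the set of $g=\sum_m b_m z^m$ whose coefficients satisfy $\sum_{j=0}^{n-1} b_{nk+j}=0$ for every $k\geq 0$. For $n\geq 2$ this subspace is infinite-dimensional, as witnessed by the linearly independent vectors $z^{nk}-z^{nk+1}$, $k\geq 0$. With an infinite-dimensional kernel and a range of finite (indeed zero) codimension, Theorem \ref{th:caradus} shows that $W_n^*$ is universal.

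To obtain the equivalence I would set $U:=W_2^*$ and use the inclusion-reversing bijection $M\mapsto M^\perp$ between the closed $U$-invariant subspaces and the closed $W_2$-invariant subspaces: minimal nonzero $U$-invariant subspaces correspond to maximal proper $W_2$-invariant subspaces, and dimension one to codimension one. It therefore suffices to prove that the invariant subspace problem has a positive answer if and only if every minimal nonzero $U$-invariant subspace is one-dimensional. For the forward direction, a minimal $M$ cannot be finite-dimensional of dimension $\geq 2$ (over $\CC$ the restriction $U|_M$ would have an eigenvector, contradicting minimality) nor infinite-dimensional (then $U|_M$ would be an operator on an infinite-dimensional Hilbert space with no nontrivial invariant subspace), so $\dim M=1$. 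For the converse, given any $T$ on a separable infinite-dimensional Hilbert space, universality furnishes an infinite-dimensional $U$-invariant subspace $X_0$ with $T$ similar to a nonzero multiple of $U|_{X_0}$; since $X_0$ is infinite-dimensional it is not minimal, so some $U$-invariant subspace lies strictly between $\{0\}$ and $X_0$, giving $T$ a nontrivial invariant subspace.

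Mechanically all of this is routine: the relation $W_n^*W_n=n\,\Id$ trivialises the Caradus--Pozzi hypotheses, and the equivalence is pure duality. The one point needing care is the converse above, where the hypothesis that every minimal invariant subspace is one-dimensional is used precisely to rule out an infinite-dimensional irreducible $X_0$. The genuine obstacle, of course, lies not in the proof but in what it reduces to: deciding whether every maximal invariant subspace of the single explicit operator $W_2$ has codimension one is exactly as hard as the invariant subspace problem itself.
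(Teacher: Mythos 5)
Your proof of the universality half is essentially the paper's: both verify Pozzi's criterion (Theorem \ref{th:caradus}) by computing $W_n^*$ coefficientwise, deducing surjectivity from $W_n^*W_n$ being a positive multiple of the identity, and exhibiting an infinite-dimensional kernel explicitly. Two small points of comparison: your constant is the correct one, since $\|W_n z^k\|^2=n$ gives $W_n^*W_n=n\,\Id$, whereas the paper prints $W_n^*W_n=(n+1)\Id$ (a harmless slip, as any positive multiple of $\Id$ yields surjectivity); and where the paper writes down the kernel vectors $f_k(z)=z^{nk}+\cdots+z^{nk+n-2}-(n-1)z^{nk+n-1}$, you instead identify $\ker W_n^*=(\ran W_n)^\perp$ via the block-sum condition on coefficients and use the witnesses $z^{nk}-z^{nk+1}$ (which coincide with the paper's $f_k$ when $n=2$) --- both routes are equally valid. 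The genuine added value of your write-up is the second half: the paper proves only universality and leaves the ``therefore'' clause entirely unproved, while you supply it in full --- the inclusion-reversing bijection $M\mapsto M^\perp$ matching minimal nonzero $W_2^*$-invariant subspaces with maximal proper $W_2$-invariant ones, the eigenvector argument ruling out minimal subspaces of finite dimension at least $2$, and, in the converse, the correct logical form (an infinite-dimensional $X_0$ furnished by universality cannot be minimal under the hypothesis, so some invariant subspace lies strictly between $\{0\}$ and $X_0$), which carefully avoids the unwarranted assumption that minimal invariant subspaces exist at all. The one reduction you invoke silently --- that the invariant subspace problem reduces to separable infinite-dimensional Hilbert spaces, so that universality of an operator on $H^2$ applies to an arbitrary $T$ --- is standard (orbit closures are separable) and deserves a sentence, but it is not a gap.
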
    
\begin{proof}
	An easy calculation shows that
	\[W_n^* f(z)=\sum_{k\geq 0}B_n(k)z^k,\]
	where $(a_k)_{k\geq 0}$ are the Taylor coefficients of $f$ and $B_n(k)=a_{nk} + a_{nk+1}+\cdots +a_{nk+n-1}$. 
	Since $W_n^* W_n=(n+1)\Id$, $W_n^*$ is surjective. Moreover $W_n^* f_k=0$ where 
	\[f_k(z)=z^{nk}+ z^{nk+1}+\cdots + z^{nk+n-2} -(n-1)z^{nk+n-1}.\]
	Therefore the kernel of $W_n^*$ is infinite dimensional.
	The universality of $W_n^*$ follows from Theorem~\ref{th:caradus}. 
\end{proof}
%\textbullet\ Manzur, Noor and Santos \cite{MNS23}, a semigroup whose adjoints are universal.

\subsection{A model  for semigroups of operators}
%\textbullet\ 	Some of our work with Benjamin? \cite{CCP19}
In \cite{CCP19}, B. C\'elari\`es, I. Chalendar and J.R. Partington investigated the concept of universal semigroups. Let us give some basic facts on semigroups in order to make this notion more understandable.

 A family $(T_t)_{t\geq 0}$ in  $\LH$ is called a \emph{$C_0$-semigroup} if 
\begin{center}
	$T_0=\Id$, $T_{t+s}=T_tT_s$ for all $s,t\geq 0$ and $\forall x\in\GH$, $\lim_{t\to 0}T_t x=x$. 
\end{center}
A \emph{uniformly continuous semigroup} is a $C_0$-semigroup such that 
\[\lim_{t\to 0}\|T_t-\Id\|=0.\]
Recall also that the generator of a $C_0$-semigroup denoted by $A$ is defined by 
\[Ax=\lim_{t\to 0}\frac{T_tx-x}{t}\]
on $D(A):=\{x:\lim_{t\to 0}\frac{T_tx-x}{t}\mbox{ exists}\}$. Moreover $(T_t)_{t\geq 0}$ is uniformly continuous if and only if $D(A)=\GH$; that is, if and only if $A\in\LH$.  See for example \cite{EN} for an introduction to $C_0$-semigroups.

Since a $C_0$-semigroup $(T_t)_{t\geq 0}$ is not always uniformly continuous, its generator $A$ is 
in general  an unbounded  operator. 
Nevertheless, provided that $1$ is not in the spectrum of $A$, the (negative) Cayley transform of $A$ defined by $V:=(A+\Id)(A-\Id)^{-1}$ is a bounded operator and is called the \emph{cogenerator} of $(T_t)_{t\geq 0}$.    
In \cite[Thm. III.8.1]{nagy-foias} the following equivalence is proved:
\begin{center}
	$V\in \LH$ is the cogenerator of a $C_0$-semigroup of contractions if and only if $V$ is a contraction and $1$ is not an eigenvalue of $V$. 
\end{center}
Not only contractivity is preserved by the cogenerator. Indeed, Sz.-Nagy and Foias \cite[Prop. 8.2]{nagy-foias} proved that a $C_0$-semigroup of contractions consists of normal,  self-adjoint, or unitary operators, if and only if its cogenerator is normal, self-adjoint, or unitary, respectively.     

\subsubsection{Definitions of universality for semigroups}
Let $(S_t)_\tpos$ be the $C_0$-semigroup on $L^2([0,+\infty))$ such that for all $\tpos$, \[S_t: \left\lbrace \begin{array}{rll}
	L^2([0;+ \infty)) & \rightarrow & L^2([0+ \infty)) \\ 
	f & \mapsto & f(\cdot + t)
\end{array}  \right. .\]  

Then for any $t>0$,  by Caradus's theorem, $S_t$ is universal.\\ 

Therefore, for any $C_0$-semigroup $(T_t)_\tpos$ on $L^2([0,+\infty))$, there exist some sequences  $({\mathcal M}_t)_t$ of closed subspaces of $L^2([0,+\infty))$, $(\lambda_t)_t$ of complex numbers and $(R_t)_t$ of bounded isomorphisms from ${\mathcal M}_t$ onto $L^2([0,\infty))$ such that, for every $t>0$, 
\[  T_t=\lambda_t R_t(S_t)_{|{\mathcal M}_t}R_t^{-1}.  \]

This possible definition of universal semigroups is not fully satisfactory since $\lambda_t$, $\mathcal{M}_t$, and $R_t$ depend  heavily on $t$. 
A  more natural and appropriate definition is the following. 

\begin{defi}
	Let $(U_t)_\tpos$ be a $C_0$-semigroup (resp. uniformly continuous) on a Hilbert space $\GH$. It is called a \emph{universal} $C_0$-semigroup (resp. uniformly continuous) if for every $C_0$-semigroup $(T_t)_\tpos$ (resp. uniformly continuous), there exist a closed subspace $\mathcal M$ invariant by every $(U_t)_\tpos$, $\lambda \in \RR$, $\mu \in \RR^{+*}$, and $R:{\mathcal M}\to  \GH$ a bounded isomorphism such that, for all $\tpos$: 
	\[ T_t  =R(e^{\lambda t}U_{\mu t})_{|{\mathcal M}}R^{-1}.   \]  	
\end{defi}
Using this definition of universality for semigroups,  
a certain amount of caution is required: 
for the backward shift semigroup on $L^2(0,\infty)$ each $S_t$ is universal, but the semigroup as a whole is
not, as we shall see later.

It is very natural to find  a criterion involving the generator which captures all the information pertaining to the semigroups. The easiest case to deal with is when the semigroup is uniformly continuous, since then its generator is bounded. In this
situation the semigroup extends to a 
group parametrised by $\RR$. The following theorem is proved in \cite{CCP19}.  
\begin{theo}
	Let $(U_t)_{t \in \RR}$ be a uniformly continuous  group whose (bounded) generator is denoted by  $A$. The following assertions are equivalent:
	\begin{enumerate}
		\item[(i)] for every  uniformly continuous group $(T_t)_{t \in \RR}$, there exist a closed subspace $\mathcal M$ invariant for $(U_t)_{t \in \RR} $, $\mu \ge 0$, and $R:{\mathcal M}\to  \GH$ a bounded isomorphism such that, for all $t \in \RR$: 
		\[ T_t  =R U_{\mu t}{}_{|\mathcal M}R^{-1}.   \] 
		\item[(ii)] $A$ is universal.
	\end{enumerate}
\end{theo}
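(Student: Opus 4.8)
The plan is to reduce the two-sided statement to a single identity between the generators and then read off the equivalence. The starting point is the dictionary between uniformly continuous groups and bounded operators: a family $(T_t)_{t\in\RR}$ is a uniformly continuous group precisely when $T_t=e^{tB}$ for a unique bounded operator $B\in\LH$, its generator, and this correspondence is a bijection. I would first record that a closed subspace $\mathcal M$ is invariant for the whole group $(U_t)_{t\in\RR}$ if and only if it is invariant for $A$: indeed $A\mathcal M\subseteq\mathcal M$ forces $U_t\mathcal M=e^{tA}\mathcal M\subseteq\mathcal M$ by summing the exponential series inside the closed space $\mathcal M$, and the converse follows by differentiating $t\mapsto U_t x$ at $t=0$. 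With this in hand, the relation $T_t=R\,U_{\mu t}{}_{|\mathcal M}\,R^{-1}$ holding for all $t\in\RR$ is equivalent, upon differentiating at $t=0$ (and conversely re-exponentiating), to the single operator identity
\[
B=\mu\,R\,(A_{|\mathcal M})\,R^{-1}.
\]
Thus (i) asserts that every $B\in\LH$ is a \emph{nonnegative real} multiple of a transported restriction of $A$, whereas universality (ii) asserts that every $B$ is a \emph{nonzero complex} multiple of such a transported restriction (take $\lambda=1/\mu$ and $\phi=R^{-1}$).

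For (i)$\Rightarrow$(ii) I would argue directly. Given an arbitrary $B\in\LH$, apply (i) to the group $(e^{tB})_{t\in\RR}$ to obtain $\mathcal M$, $\mu\ge 0$ and $R$ with $B=\mu R(A_{|\mathcal M})R^{-1}$. If $B\neq 0$ then necessarily $\mu>0$, and setting $\lambda=1/\mu\neq 0$, $X_0=\mathcal M$, $\phi=R^{-1}$ gives $\lambda B=\phi^{-1}A_{|X_0}\phi$, so $A$ models $B$. To model $B=0$ I would first apply (i) to a single nonzero group whose generator has infinite-dimensional kernel (for instance a backward shift of infinite multiplicity); the resulting identity shows that $A_{|\mathcal M}$, and hence $A$, has infinite-dimensional kernel, so $A$ restricts to $0$ on an infinite-dimensional invariant subspace, which is exactly a model of the zero operator. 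Hence $A$ is universal.

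The substantive direction is (ii)$\Rightarrow$(i), and here the whole difficulty is concentrated in one point: universality supplies only a \emph{complex} scalar, while (i) demands a \emph{nonnegative real} one. Concretely, applying universality to a given $B$ yields $\lambda\neq 0$, a closed $A$-invariant $X_0$ and $\phi$ with $\lambda B=\phi^{-1}A_{|X_0}\phi$; writing $1/\lambda=\mu e^{i\psi}$ with $\mu>0$, and using that $A$ is bounded so that $z\mapsto e^{zA}$ extends the group to all complex times, one gets $e^{tB}=R_0\,e^{\mu t\,(e^{i\psi}A)_{|X_0}}\,R_0^{-1}$ with $R_0=\phi^{-1}$. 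The nuisance factor $e^{i\psi}$ is precisely what must be removed. I would remove it by exploiting the quantitative mechanism behind universality in the Caradus--Rota spirit: a universal operator models every operator of \emph{sufficiently small norm} with trivial scalar, i.e.\ there is $\delta>0$ such that $\norm{C}<\delta$ implies $C\cong A_{|X}$ for some invariant $X$. Granting this, given $B$ I would choose a \emph{positive real} $\mu>\norm{B}/\delta$, so that $\norm{\tfrac1\mu B}<\delta$; then $\tfrac1\mu B\cong A_{|\mathcal M}$ for some invariant $\mathcal M$, that is $B=\mu R(A_{|\mathcal M})R^{-1}$ with $\mu>0$, and exponentiating recovers $T_t=R\,U_{\mu t}{}_{|\mathcal M}\,R^{-1}$. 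The positivity is then automatic, because shrinking the norm of an operator is effected by a positive real factor.

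The main obstacle is therefore exactly the passage from the complex similarity scalar inherent in the word ``universal'' to the nonnegative real scaling forced by the group parametrisation $t\mapsto\mu t$; equivalently, it is the step of producing, for every $B$, a transported restriction of $A$ equal to a \emph{positive} multiple of $B$ rather than merely to some complex multiple. I would organise the proof so that this positivity is harvested from the small-norm modelling property of universal operators, since that is where a genuinely real (indeed positive) normalisation enters, leaving the identifications of invariant subspaces and the exponentiation back to the group level as routine.
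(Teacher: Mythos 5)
Your reduction to generators is correct: norm-differentiation at $t=0$ and re-exponentiation show that $T_t=R\,U_{\mu t}{}_{|\mathcal M}R^{-1}$ for all $t\in\RR$ is equivalent to the single identity $B=\mu R(A_{|\mathcal M})R^{-1}$, where $B$ is the bounded generator of $(T_t)$, and a closed subspace is invariant under the whole group if and only if it is invariant under $A$. The direction $(i)\Rightarrow(ii)$ is sound, including your correct observation that $B=0$ needs separate treatment (you extract an infinite-dimensional closed subspace of $\ker A$ by applying (i) to a group generated by an operator with infinite-dimensional kernel; note this uses separability of $\GH$ so that this subspace is isomorphic to $\GH$). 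The genuine gap is in $(ii)\Rightarrow(i)$, at exactly the point you flag as the crux and then skip with ``Granting this'': the small-norm modelling lemma --- there is $\delta>0$ such that $\norm{C}<\delta$ implies $C$ is similar, with scalar $1$, to the restriction of $A$ to a closed invariant subspace --- is asserted, not proved. The definition of universality supplies, for each $C$, only some complex $\lambda_C\neq 0$ with $\lambda_C C=\phi^{-1}A_{|X_0}\phi$, and the uncontrolled phase of $\lambda_C$ is precisely the obstruction you set out to remove, so as written the argument assumes what it must establish. Moreover the mechanism you gesture at (``Caradus--Rota spirit'') is not available for a general universal $A$: Caradus's intertwining $\sum_{n\geq 0}V^nKC^n$ requires a right inverse $V$ of $A$, hence surjectivity, whereas a universal operator need not be surjective --- by Theorem~\ref{th:caradus} (Pozzi), an operator such as $S^*\oplus 0$ on $\GH\oplus\CC$, with $S^*$ the backward shift of infinite multiplicity, is universal although its range has codimension $1$.

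The gap is closable from the bare definition by transitivity through Rota's model (note that the present survey states the theorem without proof, deferring to \cite{CCP19}). Apply universality once to $S^*$ itself: there are $\lambda_0\neq 0$, an $A$-invariant closed subspace $X_0$ and an isomorphism $\phi:\GH\to X_0$ with $\lambda_0 S^*=\phi^{-1}A_{|X_0}\phi$. Given a generator $B$, pick a real $\mu>r(B)/\abs{\lambda_0}$, where $r$ denotes the spectral radius; by Rota's theorem the operator $(\mu\lambda_0)^{-1}B$, having spectral radius less than $1$, is similar to the restriction of $S^*$ to an invariant subspace $Y$, whence $\mu^{-1}B$ is similar to $(\lambda_0 S^*)_{|Y}$ and, transporting by $\phi$, to $A_{|\phi(Y)}$, where $\phi(Y)$ is closed and $A$-invariant (a part of a part of $A$ is a part of $A$). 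This yields $B=\mu R(A_{|\mathcal M})R^{-1}$ with $\mu>0$ and $\mathcal M=\phi(Y)$, and exponentiating gives (i); the case $B=0$ is covered as well, since $r(0)=0$. With this lemma supplied --- and it is naturally a small-spectral-radius statement, strictly stronger than your small-norm version --- your overall architecture is correct.
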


Here is an example illustrating the previous theorem. 
	Take $A=S_1$. To calculate the group $(U_t)_{t \in \RR}$ it is convenient to
	work with the Fourier transform $\GF$, which, by the Paley--Wiener theorem \cite{rudin87}
	provides an isometric isomorphism between
	$L^2(0,\infty)$ and the Hardy space $H^2(\CC^+)$ of the upper half-plane $\CC^+$.
	Then $S_1^*$ is the right shift by 1 on $L^2(0,\infty)$, and the 
	operator $\GF S_1^* \GF^{-1}$ is the analytic Toeplitz operator with symbol $z \mapsto e^{iz}$.
	
	That is, for $t \in \RR$, $\GF U^*_t  \GF^{-1}$ is the analytic Toeplitz operator with symbol $x\mapsto \exp(t e^{ix})$, where $x \in \RR$,
	and $\GF U_t  \GF^{-1}$ is the anti-analytic Toeplitz operator with symbol
	$x \mapsto \exp(t e^{-ix})$.

Note that the shift semigroup $(S_t)_{t \ge 0}$ on $L^2(0,\infty)$ is not universal even for the class
of all uniformly continuous contraction semigroups.
Its infinitesimal generator $A$ is  defined by $Af=f'$ and hence $\ker (A-\lambda \Id)$
has dimension at most 1 for every $\lambda \in \CC$. Hence if $B$ is a non-zero bounded
operator with kernel of dimension  at least 2, then we cannot have an identity of the form
$ B-\lambda \Id=\mu R(A_{|{\mathcal M}})R^{-1}$, 
and so we have no identity of the
form 
$e^{tB} = e^{\lambda t}R(S_{\mu t})_{|{\mathcal M}}R^{-1}$.

In \cite{CCP19} the authors study examples of universal $C_0$-semigroups of contractions and quasicontractions,
and produce a large class of universal semigroups arising from Toeplitz operators with anti-analytic symbol. 
In addition, they deal with $C_0$-semigroups which are not quasicontractive. 
Under the conditions of concavity and analyticity, which imply the existence of a Wold-type decomposition, they can provide models for such semigroups.

\section{Specific operator classes}
\label{sec:4}

Significant work has focused on restricted classes of operators, yielding original insights:\\

\subsection{Hyponormal Operators} 

A Hilbert space operator $T$ is {\em hyponormal\/} if $T^*T-TT^* \ge 0$. For example, it is well known
that every subnormal operator
(one that is the restriction of a normal operator to a closed invariant subspace) is hyponormal.
Although Scott Brown \cite{Brown78} showed many years ago that every subnormal operator has an invariant
subspace, the question is still open for hyponormal operators.
A recent paper \cite{KL24} proposes various strategies to show that hyponormal operators   have invariant subspaces; these include the study of ``Diagonal + Compact'' operators and the iterates of the Aluthge transform.
We recall that the latter, introduced in 1990 \cite{aluthge}, can be defined as $\widetilde T=|T|^{1/2}U|T|^{1/2}$, where $T$ has
a canonical polar decomposition $T=U|T|$ with $U$ a partial isometry.
These notions may stimulate further research. \\

\subsection{Polynomially Bounded Operators} 
Recall that an operator $T$ on a Banach space $X$  is polynomially bounded if there is a constant $c>0$ such that
$\|p(T)\| \le c  \sup \{|p(\lambda)|: |\lambda|=1 \}$ for all polynomials $p$.
Ambrozie and Müller \cite{AM04}, used dual algebra techniques to prove that if $T$ is a polynomially bounded
operator $T$ whose spectrum contains the unit circle then $T^*$
has a nontrivial invariant subspace. Thus, in a reflexive space, $T$ also has an invariant subspace.
Further, if $T$ also satisfies the vanishing condition $\|T^n x\| \to 0$ for all $x \in X$ then $T$ again has
an invariant subspace.

Advances in dual algebra techniques have extended results on invariant subspaces for polynomially bounded operators on Banach spaces, as noted in the 2021 Fields Institute overview \cite{AM21}. 
One recent result is that of Liu~\cite{liu}, who shows that a polynomially bounded operator whose spectrum contains the unit circle
and which satisfies $\|T^*{}^n x^*\| \to 0$ for all $x^* \in X^*$ has itself an invariant subspace.
Further results in this direction are due to R\'ejasse \cite{rejasse}.\\

\subsection{Quasisimilarity} 
It is still unknown whether every Hilbert space operator $T$ possesses a hyperinvariant subspace
(that is, one invariant under all operators commuting with $T$).
Two   operators $A$ and $B$ on a Hilbert space $H$ are said to be quasisimilar if there exist bounded {\em quasiaffinities\/}
(injective mappings with dense range) $X$ and $Y$ such that
$XA = BX$ and $AY = YB$. 
The result that if one of $A$ and $B$ has a hyperinvariant subspace
then so does the other goes back to Hoover (see, e.g. \cite{hoover71,hoover72}).
A generalization of this, {\em ampliation quasisimilarity}, was introduced by Foias and Pearcy \cite{FP05} and similarly
preserves the existence of hyperinvariant subspaces.

A further generalization is pluquasisimilarity, introduced by
Bercovici et al. \cite{BJKP19}. This is the property that there exist families $\GF$ and $\GG$
such that every $X \in \GF$ satisfies $XA=BX$ and every $Y \in \GG$ satisfies $AY=YB$, and such that
$\vee \{\ran X: X \in \GF\}=H$ and
$\cap \{\ker X: X \in \GF\}=\{0\}$, with similar conditions on operators in $\GG$. 
Then we have the following result.

\begin{theo}
Suppose that $A$ and $B$ are pluquasisimilar operators on a Hilbert space and that one of $A$ and $B$ 
has a hyperinvariant subspace.
Then so does the other
\end{theo}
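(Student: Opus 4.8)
The plan is to prove the implication in one direction and invoke the manifest symmetry of the definition (which exchanges the roles of $\GF$ and $\GG$) for the other. So suppose it is $A$ that admits a nontrivial hyperinvariant subspace $M$, meaning $M \neq \{0\}$, $M \neq H$, and $CM \subseteq M$ for every operator $C$ commuting with $A$; I aim to build a hyperinvariant subspace for $B$. The algebraic engine of the argument is the following observation: if $D$ commutes with $B$, while $X \in \GF$ (so $XA = BX$) and $Y \in \GG$ (so $AY = YB$), then the sandwiched operator $YDX$ commutes with $A$. Indeed,
\[
A(YDX) = (AY)DX = (YB)DX = Y(DB)X = YD(XA) = (YDX)A,
\]
where we have used $AY = YB$, then $BD = DB$, then $BX = XA$. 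Hence every such $YDX$ lies in the commutant of $A$ and therefore preserves $M$.

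For the candidate subspace of $B$ I would take the smallest closed subspace, invariant under the commutant of $B$, that contains all the images $XM$ with $X \in \GF$; concretely,
\[
N := \bigvee \{ D X m : D \in \{B\}',\ X \in \GF,\ m \in M \},
\]
where $\{B\}'$ denotes the operators commuting with $B$. This $N$ is invariant under $\{B\}'$ by construction, since for $W \in \{B\}'$ one has $W(DXm) = (WD)Xm$ with $WD \in \{B\}'$. It is also nonzero: choosing $0 \neq m \in M$, the condition $\cap\{\ker X : X \in \GF\} = \{0\}$ supplies an $X \in \GF$ with $Xm \neq 0$, and $Xm \in N$ (take $D = \Id$).

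The delicate point, and the step I expect to be the real obstacle, is to show that $N$ is proper, $N \neq H$; this is where the second family $\GG$ enters decisively. For any $Y \in \GG$, the engine above gives $Y(DXm) = (YDX)m \in M$, so by boundedness of $Y$ and closedness of $M$ we obtain $YN \subseteq M$ for every $Y \in \GG$. If $N$ were all of $H$, this would force $\ran Y \subseteq M$ for every $Y \in \GG$, and hence $\bigvee\{\ran Y : Y \in \GG\} \subseteq M$; but the pluquasisimilarity hypothesis gives $\bigvee\{\ran Y : Y \in \GG\} = H$, so $M = H$, contradicting properness of $M$. Thus $N$ is a nontrivial closed subspace invariant under the commutant of $B$, that is, a hyperinvariant subspace for $B$. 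The intertwining identities and the invariance of $N$ are routine bookkeeping; the only real content is this properness argument, which exploits the asymmetry between the kernel condition on $\GF$ (used for nontriviality) and the range condition on $\GG$ (used for properness).
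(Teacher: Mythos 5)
Your proof is correct, and there is nothing in the paper to compare it against: this survey states the theorem without proof, attributing it to Bercovici, Jung, Ko and Pearcy \cite{BJKP19}. Your argument is the natural extension of Hoover's classical quasisimilarity argument, which is precisely the line taken in the cited literature. The key identity $A(YDX) = (YDX)A$ for $D \in \{B\}'$, $X \in \GF$, $Y \in \GG$ is verified correctly; the candidate subspace $N = \bigvee\{DXm : D \in \{B\}',\ X \in \GF,\ m \in M\}$ is invariant under $\{B\}'$ by construction; nontriviality uses $\bigcap\{\ker X : X \in \GF\} = \{0\}$ exactly as it should; and properness follows from $YN \subseteq M$ (boundedness of $Y$ passing through the closed span) together with $\bigvee\{\ran Y : Y \in \GG\} = H$. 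Your appeal to symmetry for the converse direction is legitimate, since the definition imposes the same range and kernel conditions on both families, so exchanging $\GF$ and $\GG$ exchanges the roles of $A$ and $B$. Your closing diagnosis is also accurate: the kernel condition on $\GF$ and the range condition on $\GG$ are the only two of the four hypotheses actually used in this direction, which is exactly why the symmetric statement needs the full definition.
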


Further work in this direction can be found in \cite{GK23}. It seems that one possible way of
solving the hyperinvariant subspace problem positively could be by showing that every operator
is pluquasisimilar to an operator   known to have hyperinvariant subspaces.

\section{Banach space results}
\label{sec:5}

Although it is known that operators do not always possess invariant subspaces, even
when defined on Banach spaces as friendly as $\ell^1$ and $c_0$ (see \cite{read85} and \cite{read89}),
there have been further developments, some of which we now record.\\

\subsection{Existence and non-existence of invariant subspaces}

In \cite{GR14}, S. Grivaux and M. Roginskaya present a general method for constructing operators without non-trivial invariant closed subsets on a large class of non-reflexive Banach spaces. In particular, their approach clarifies, unifies and generalizes several constructions due to C. Read of operators without non-trivial invariant subspaces on the spaces $\ell^1$, $c_0$ or an $\ell^2$
direct sum of copies of $J$, where $J$ denotes the James space, and without non-trivial invariant subsets on $\ell^1$.  In addition, they explore which geometric properties of a Banach space will ensure that it supports an operator without non-trivial invariant closed subset.   \\

%\subsection{A space on which all operators have nontrivial invariant subspaces}

The scalar-plus-compact problem asks whether there exists an infinite dimensional Banach space $X$ such that every bounded linear operator $T\in\LX$ has the form $T=\lambda \Id+K$ with a compact operator $K$. Informally, one says that such a Banach space has ``very few'' operators. This long-standing difficult problem was solved by Argyros and Haydon in \cite{AH11}. Since every compact operator has many invariant subspaces, any operator $T$ on this space $X$ has also many nontrivial  invariant subspaces.\\

Previously, the strongest result in this direction was the theorem by W.T. Gowers and B. Maurey \cite{GM93}  who constructed a Banach space $X$ on which  every bounded linear operator $T\in \LX$ has the form $T=\lambda \Id+S$  with a strictly singular operator $S$.  
A strictly singular operator on a Banach space  is one that is not an isomorphic embedding when restricted to any
infinite-dimensional subspace.
In general the set of strictly singular operators includes the compact operators and the Banach 
space studied in \cite{GM93} does not necessarily provide a  Banach space on which all operators have nontrivial invariant subspaces since C. Read constructed a strictly singular operator on a Banach space induced by a direct sum of James spaces with no nontrivial invariant subspace \cite{read99}.  \\

In \cite{read86}, C. Read conjectured the existence of an operator $T$ on $\ell^1$  such that, for every non-constant polynomial $p$, the operator $p(T)$ has a non-trivial invariant subspace.  In \cite{GR19}, E.A. Gallardo-Guti\'errez and C. Read  constructed such an operator $T$, with the additional properties that its spectrum is reduced to $\{0\}$ and such that, for every non-constant analytic  germ $p$ at $0$, the operator $p(T)$ has no non-trivial invariant subspace. The proof relies on a subtle refinement of the classical Read construction of operators without invariant subspaces.\\
%\textbullet\ 	\cite{GR19} The last paper by Charles Read, a quasinilpotent operator $T$ on $\ell^1$

In \cite{CE24} the authors investigate  the existence of non-trivial closed invariant subspaces of
an operator $T$ on Banach spaces whenever its square $T^2$ has or, more generally, whether there exists a polynomial $p$ of degree at least $2$ such that the lattice of invariant subspaces of $p(T)$ is non-trivial. In the Hilbert space setting, the $T^2$-problem was posed by Halmos in the seventies and in 2007, Foias, Jung, Ko and Pearcy~\cite{FJKP} conjectured that it could be equivalent to the Invariant Subspace Problem.
M. Contino and E.A. Gallardo-Guti\'errez~\cite{CE24} gave the following partial answer to Halmos's problem.
\begin{theo}
Let $X$ be an infinite dimensional separable complex Banach space, $T\in\LX$ whose spectrum is reduced to $\{0\}$ and $n\geq1$. Then the following are equivalent:
\begin{enumerate}
	\item $T$ has a nontrivial closed invariant subspace;
	\item $(T-\lambda \Id)^n$ has a nontrivial closed invariant subspace for every $\lambda\in\CC$;
	\item $(T-\lambda \Id)^n$ has a nontrivial closed invariant subspace for some $\lambda\in\CC\setminus\{0\}$.
\end{enumerate} 	
\end{theo}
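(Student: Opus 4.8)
The plan is to establish the cycle $(1)\Rightarrow(2)\Rightarrow(3)\Rightarrow(1)$, in which the first two implications are immediate and all the content is concentrated in the last. For $(1)\Rightarrow(2)$ I would only recall that any closed subspace $M$ invariant for $T$ is invariant for every polynomial in $T$; in particular $(T-\lambda\Id)^n(M)\subseteq M$ for every $\lambda\in\CC$. The implication $(2)\Rightarrow(3)$ is a trivial specialisation, since $(2)$ asserts the conclusion for all $\lambda$ and hence for some $\lambda\neq 0$.

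The substance is $(3)\Rightarrow(1)$, and here the decisive remark is that the hypothesis $\sigma(T)=\{0\}$ together with $\lambda\neq 0$ forces $T-\lambda\Id$ to be invertible. Set $S:=(T-\lambda\Id)^n$; by the spectral mapping theorem $\sigma(S)=\{(-\lambda)^n\}=:\{\mu\}$ is a single \emph{nonzero} point, so $S-\mu\Id$ is quasinilpotent. Consider the polynomial $h(z)=(z-\lambda)^n-(-\lambda)^n$, which satisfies $h(0)=0$ and $h'(0)=n(-\lambda)^{n-1}\neq 0$, the nonvanishing being exactly where $\lambda\neq 0$ is used. Consequently $h$ admits a holomorphic local inverse $g$ near $0$ with $g(0)=0$, and the holomorphic functional calculus gives $T=g(S-\mu\Id)$.

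Writing the Taylor expansion $g(w)=\sum_{k\geq 1}c_k w^k$, I would then check that $T=\sum_{k\geq 1}c_k(S-\mu\Id)^k$ with the series converging in operator norm: since $S-\mu\Id$ has spectral radius zero we have $\|(S-\mu\Id)^k\|^{1/k}\to 0$, which dominates the geometric growth $\limsup_k|c_k|^{1/k}=1/\rho<\infty$ coming from the positive radius of convergence $\rho$ of $g$, so the root test yields absolute norm convergence. In particular $T$ is a norm-limit of polynomials in $S$. Now take the nontrivial closed subspace $M$ invariant for $S$ furnished by $(3)$: it is invariant for every polynomial in $S$, hence for each partial sum $\sum_{k=1}^{N}c_k(S-\mu\Id)^k$; letting $N\to\infty$ and using that these converge to $T$ in norm while $M$ is closed, I conclude $T(M)\subseteq M$, so $M$ witnesses $(1)$.

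The only genuinely delicate point is this last passage, namely that $g(S-\mu\Id)$ is a true norm-limit of polynomials in $S$, and it is precisely here that $\lambda\neq 0$ is indispensable: for $\lambda=0$ one has $h(z)=z^n$ with $h'(0)=0$, no holomorphic local inverse exists, and the construction collapses --- consistently with the case $\lambda=0$ being the still-open $T^n$-problem of Halmos rather than a consequence of quasinilpotence.
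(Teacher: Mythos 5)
Your proof is correct, but note that there is no in-paper argument to compare it against: the survey states this theorem without proof, quoting it from Contino and Gallardo-Guti\'errez \cite{CE24}. Your route is the natural one and is in the spirit of the original note: the whole content of $(3)\Rightarrow(1)$ is that $T$ lies in the norm-closed unital algebra generated by $S=(T-\lambda\Id)^n$, which you obtain by observing that $h(z)=(z-\lambda)^n-(-\lambda)^n$ satisfies $h(0)=0$ and $h'(0)=n(-\lambda)^{n-1}\neq 0$, so it has a local holomorphic inverse $g$ with $g(0)=0$, and the composition rule for the Riesz--Dunford calculus (which depends only on the germ of the function near $\sigma(T)=\{0\}$) yields $T=g(S-\mu\Id)$ with $\mu=(-\lambda)^n$. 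The delicate step is exactly where you locate it: since $S-\mu\Id$ is quasinilpotent, $\|(S-\mu\Id)^k\|^{1/k}\to 0$, so $\limsup_k\bigl(|c_k|\,\|(S-\mu\Id)^k\|\bigr)^{1/k}=0$ and the Taylor series of $g$ applied to $S-\mu\Id$ converges absolutely in operator norm; the partial sums are polynomials in $S$, so any nontrivial closed $S$-invariant subspace is $T$-invariant by closedness. Your closing remark is also apt: for $\lambda=0$ one has $h(z)=z^n$ with $h'(0)=0$, no local inverse exists, and indeed that case is the still-open $T^n$-problem of Halmos rather than a consequence of quasinilpotence.
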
	

\subsection{Typical operators}

We now describe some recent work which, in a sense, tells us what 
properties are possessed by a ``typical'' operator.

%{\em Every complete metric space is a Baire space. More generally, every Polish space (i.e., a completely metrizable separable topological space)
%
%is a Baire space.
%
%
%
% Let $X$ be a Banach space and denote by $\BX$ the set of contractions in $\LX$.
%
%• If $X$ is separable, then $(\BX,SOT)$ is Polish.
%• If $X^*$ is separable, then $(\BX,SOT_*)$ and $(\BX,SOT^*)$ are Polish.
%
% • If $X$ is reflexive and separable, then $(\BX,WOT)$ is Polish.
%
%
%
%As a corollary it follows that for $X=\ell^2$ being comeager implies the density for the WOT, SOT and SOT* topology
%
%and for $X=\ell^1$, being comeager with respect to the SOT topology  implies also the density.} 

 \begin{defi} 
 	Let $(M,\tau)$ be a topological space. A property $(P)$ of elements of $M$ is typical
 	for the topology $\tau$ if the set
 	\[  \{x\in M: x\mbox{ has }(P)\}\]
 	is comeager in $(M,\tau)$; i.e., it contains an intersection of countable dense open sets. 
 	
 	%The property $(P)$ is said to be atypical for τ if its negation is typical for τ.
 	In the case where a property $(P)$ of elements of a topological space $(M,\tau)$ is typical, we say that
 	``a typical $x \in (M , \tau )$ has $(P)$''.
 \end{defi}
 Let us now consider typical properties of contractions. Given a Banach space $X$, we denote by $\BX$ the set of contractions on $X$, that is,
 \[\BX:=\{T \in\LX :\|T\|\leq 1\}.\]
 A study of  typical properties was initiated by T. Eisner for contractions for different topologies on $\BX$, namely 
 \begin{itemize}
 \item the operator norm topology,%denoted by $\|\cdot\|_op$ and defined by $T_n \stackrel{\| \cdot \|}_{op}{\rightarrow} T$ if $\lim_{n\to\infty} \sup_{x\in X,\|x\|_X\leq 1}\|T^n x-Tx\|_X=0$\\
 \item the   Strong Operator Topology, denoted by SOT, and defined by  the pointwise convergence topology on $\LX$,
 \item the Strong* Operator Topology, denoted by  SOT*, defined by  the pointwise convergence topology for operators and their adjoints,
 \item the Weak Operator Topology, denoted by WOT, defined  by the weak pointwise convergence topology on $\LX$. 
\end{itemize}

These have the following properties:
\begin{itemize}
\item If $X$ is separable, then $(\BX,SOT)$ is a Polish space (separable, and completely metrizable).

\item If $X^*$ is separable, then $(\BX,SOT)$ and $(\BX,SOT^*)$ are Polish.

\item If $X$ is reflexive and separable, then $(\BX,WOT)$ is Polish.\\
\end{itemize}
As a corollary it follows from Baire's theorem that for $X=\ell^2$ a set being comeager implies its density for the WOT, SOT and SOT* topology;
 for $X=\ell^1$, being comeager with respect to the SOT topology also implies   density.
  
 In \cite{eisner2010} T. Eisner showed that, with respect to the Weak Operator Topology, a typical contraction on a separable Hilbert space is unitary. 
  This concept was further developed by Eisner and M\'atrai in \cite{em13}, who studied typical properties of contractions on separable Hilbert spaces for other topologies, such as the Strong Operator Topology.  For $X=\ell^2$, they proved that a typical contraction 
 $T \in  (\BX,SOT)$ is unitarily equivalent to the infinite-dimensional
 unilateral backward shift on 
 \[\ell^2(\mathbb N,\ell^2) := \left\{(u_n)_{n\geq 1} : u_n\in  \ell^2 \mbox{ for every n} \geq  1 \mbox{ and }\sum_{n\geq 1}\|u_n\|_{\ell^2}<\infty \right\}\]
 defined by $B_\infty(u_1,u_2,u_3,...) = (u_2,u_3,...)$. These results imply in particular that, for $X=\ell^2$,  
 a typical contraction $T   \in  (\BX,SOT)$ or $T   \in  (\BX,WOT)$ has many invariant subspaces. 
 
 A few years later, Grivaux, Matheron and Menet \cite{GMM21}  investigated typical properties of contractions on $\ell^p$-spaces.  Their main goal was to determine whether, for $X=\ell^p,$ a typical contraction $T\in (\BX,SOT)$ or $T\in (\BX,SOT^*)$  has a nontrivial invariant subspace.   They proved that a typical contraction, for $X=\ell^2$,  $T \in  (\BX,SOT^*)$ has a nontrivial invariant subspace, by using the fact that a typical contraction for the $SOT^*$ topology  has a  spectrum containing the unit circle.%so that Brown Cheverau Pearcy famous theorem applies.
 
 They also proved that, for $X=\ell^1$ and the Strong Operator Topology,  a typical contraction $T$   has a nontrivial invariant subspace. We know, however, that there are operators on $\ell^1$ with no invariant subspaces (see \cite{read85}).
 
 Very recently,  typical properties of positive contractions were investigated by V. Gillet \cite{gillet1,gillet2}.

\end{document}